\newtheorem{theorem}{Theorem}[section]
\newtheorem{proposition}{Proposition}[section]
\newtheorem{lemma}[theorem]{Lemma}
\newtheorem{remark}[theorem]{Remark}
\numberwithin{equation}{section}
\newcommand{\si}{s}
\newcommand{\Si}{S}
\newcommand{\LCP}{LCP\xspace}
\newcommand{\numberset}[1]{\mathbb{#1}}
\newcommand{\R}{\numberset{R}}
\newcommand{\sign}{\mathop{\mathrm{sign}}}
\newcolumntype{R}[2]{%
	>{\adjustbox{angle=#1,lap=\width-(#2)}\bgroup}%
	l%
	<{\egroup}%
}
\title{Convergence results for some piecewise linear solvers\thanks{The authors wish to thank Lutz Lehmann for his valuable input.}}
\author{
	Manuel Radons \\ Technische Universit\"at Berlin \\
	\and 
	Siegfried M. Rump \\
	Hamburg University of Technology \\
	}
\date{\today}
\begin{document}

\maketitle

\begin{abstract}
Let $A$ be a real $n\times n$ matrix and $z,b\in \mathbb R^n$. The piecewise linear equation system $z-A\vert z\vert = b$ is called an \textit{absolute value equation}. 
We consider two solvers for this problem, one direct, one semi-iterative, and extend their previously known ranges of convergence. 
\end{abstract}

\section{Introduction}

Denote by $\operatorname{M}_n(\mathbb R)$ the space of $n\times n$ real matrices and let 
$A\in\operatorname{M}_n(\mathbb R)$ and $z,b\in \mathbb R^n$. The piecewise linear equation system
\begin{align}\label{AVE_std}
z - A\vert z\vert = b 
\end{align}
is called an \textit{absolute value equation} (AVE) and was first introduced by Rohn in \cite{rohn1989interval}. 
Mangasarian proved its polynomial equivalence to the linear complementarity problem (\LCP) \cite{mangasarian2006absval}.	
In \cite[P. 216-230]{neumaier1990interval} Neumaier authored a detailed survey about the AVEs intimate connection to the research field of \textit{linear interval equations}. Especially closely related system types are equilibrium problems of the form 
\begin{align}\label{brugEq}
Bx+ \max (0, x)= c,
\end{align}
where $B\in \operatorname{M}_n(\mathbb R)$ and $x, c \in \mathbb R^n$. A prominent example is the first hydrodynamic model presented in \cite{brugnano2008iterative}. Using the identity $\max(s, t) = (s+t +\vert s-t \vert)/ 2$, equality \eqref{brugEq} can be reformulated as
\begin{align*} 
Bx + \frac{x+\vert x\vert}{2}=c\quad \Longleftrightarrow \quad (2B+I)x +\vert x \vert =2c,
\end{align*}
and for regular $(2B+I)$ this is equivalent to an AVE \eqref{AVE_std}.

The AVE also has a connection to nonsmooth optimization: 
piecewise affine systems of arbitrary structure may arise as local linearizations of piecewise differentiable objective functions \cite{NewtonPL} or as intermediary problems in the numerical solution of ordinary differential equations with nonsmooth right-hand side \cite{ODE}. Such system can be, with a \textit{one-to-one solution correspondence}, transformed into an AVE \cite[Lem. 6.5]{griewank2014abs}. 

This position at the crossroads of several interesting problem areas has given rise to the development of efficient solvers for the AVE. The latest publications on that matter include approaches by linear programming \cite{mangasarian2014abs} and concave minimization \cite{mangasarian2007concave}, as well as a variety of Newton and fixed point methods (see, e.g., \cite{brugnano2008iterative}, \cite{yuan2012iterative}, \cite{hu2011absval}). In this article we will present and further analyze two solvers for the AVE: the signed Gaussian elimination, which is a direct solver that was developed in \cite{radons2016sge}; and a semi-iterative generalized Newton method developed in \cite{griewank2014abs, streubel2014abs}. In particular, we unify and further extend the known convergence results for both algorithms.


\textit{Content and structure of this note:}
In Section \ref{prelimSection} we will assemble the necessary preliminaries from the literature and (re-)prove some auxiliary results. In Section \ref{sec:unify} we will prove a theorem that will allow us to unify and extend the existing correctness, resp. convergence results for the two solvers mentioned above.
In Sections \ref{sgeSection} and \ref{snSection} the solvers are presented and the main results proved.
In Section \ref{sec:compare} we provide an example which demonstrates that despite the results presented in this note, both solvers are not equivalent.

\section{Preliminaries}\label{prelimSection}

We denote by $[n]$ the set $\{1,\dots,n\}$. 
For vectors and matrices \textit{absolute values and comparisons are used entrywise}. Zero vectors and matrices of proper dimension are denoted by~$\mathbf 0$. 
Let $c\in\mathbb R^n$, then we denote by $\operatorname{diag}(c)$ a diagonal matrix in $\operatorname M_n(\R)$ with entries $c_1,\dots, c_n$. 

A \textit{signature matrix} $S$, or, briefly, \textit{a signature}, is a diagonal matrix with entries $+1$ or $-1$, i.e. $\vert S\vert =I$. The set of $n$-dimensional signature matrices is denoted by $\mathcal S_n$. A single diagonal entry of a signature is a sign $s_i$ ($i\in [n]$). Let $z\in \R^n$. We write $\Si_z$ for a signature, where $s_i=1$ if $z_i\ge 0$ and $-1$ else. Clearly, we then have $\Si_zz=\vert z\vert$. Using this convention, we can rewrite (\ref{AVE_std}) as 
\begin{align}\label{AVE}
(I-A\Si_z)z\ =\ b\; .
\end{align}
In this form it becomes apparent that the main difficulty is to determine the proper signature $S$ for $z$. That is, to determine in which of the $2^n$ orthants about the origin $z$ lies. This is NP-hard in general \cite{mangasarian2007absval}.

Denote by $\rho (A)$ the spectral radius of $A$ and let (c.f. \cite[Chap. 5]{rohn1989interval})
\begin{align*}
\rho_0(A)\equiv \max \{\vert \lambda\vert: \lambda\ \text{\textit{real}\ eigenvalue of $A$}\}
\end{align*}
be the \textit{real spectral radius} of $A$. Then its \textit{sign-real spectral radius} is defined as follows (see \cite[Def. 1.1]{rump1997theorems}): 
\begin{align*}
\rho^{\R}(A) \; \equiv \; \max \left \{ \rho_0(S A ) : S \in \mathcal S_n  \right \}. 
\end{align*}
The exponential number of signatures $S$ accounts for the NP-hardness of the computation of $\rho^{\R}(A)$ \cite[Cor. 2.9]{rump1997theorems}. 
For a fixed signature $\bar S$, we have $\{S(\bar S A): S \in \mathcal S_n\}=\{S A: S \in \mathcal S_n\}$. Furthermore, since all $S \in  \mathcal S_n$ are involutive, i.e., $S^{-1}=S$, the spectra of $A$ and $S AS$ are identical. These observations immediately yield the useful identity
\begin{align*}
\rho^{\R}(A)=\rho^{\R}(S_1 A)=\rho^{\R}(AS_2)=\rho^{\R}(S_1AS_2)\qquad \forall\ S_1, S_2 \in \mathcal S_n.
\end{align*} 
The solvability properties of \eqref{AVE} and the quantity $\rho^{\R}(A)$ are strongly connected (cf. \cite{rump1997theorems}, \cite[p. 220, Thm. 6.1.3-5]{neumaier1990interval}):

\begin{theorem}\label{unique}
	Let $A\in\operatorname M_n(\mathbb R)$. Then the following are equivalent:
	\begin{enumerate}
		\item $ \rho^{\R}(A)\ <\ 1$.
		\item The system $(I-AS_z)z = b$ has a unique solution for all $b\in\mathbb R^n$.
		\item The piecewise linear function $\varphi:\ \mathbb R^n \rightarrow \mathbb R^n,\ z\rightarrow z+A\vert z\vert$ is bijective.
		\item $\operatorname{det}(I-AS)\ >\ 0$ for all $S\in \mathcal S_n$.
	\end{enumerate}
\end{theorem}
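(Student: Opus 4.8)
The plan is to use condition (4) as a hub and to prove $(1)\Rightarrow(4)$, $(4)\Rightarrow(2)$, $(4)\Rightarrow(3)$, and $(2)\Rightarrow(1)$; the remaining implications then follow. Two structural observations organize everything. First, both (1) and (4) are invariant under $A\mapsto -A$, since $\rho^{\R}(-A)=\rho^{\R}(A)$ and since $\{\,I-AS:S\in\mathcal S_n\,\}=\{\,I+AS:S\in\mathcal S_n\,\}$ because $-S$ again runs through $\mathcal S_n$; this is what lets the ``$-A$'' statement (2) and the ``$+A$'' statement (3) be treated on the same footing. Second, averaging the branch determinants over all signs gives $2^{-n}\sum_{S\in\mathcal S_n}\det(I-AS)=\det(I)=1$, because the sign $s_i$ enters the $i$-th column $e_i-s_i a_i$ of $I-AS$ linearly and so averages out; hence the $\det(I-AS)$ can never all be negative, so ``all nonzero of one common sign'' already forces ``all positive.'' This explains why (4) is phrased with strict positivity.

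For $(1)\Rightarrow(4)$ I would run a homotopy in a scaling parameter. Fix $S\in\mathcal S_n$ and set $h_S(t)=\det(I-t\,AS)$ for $t\in[0,1]$. Since $\rho^{\R}$ is positively homogeneous, $\rho^{\R}(tA)=t\,\rho^{\R}(A)<1$ on $[0,1]$, so $tAS$ never has the real eigenvalue $1$ and $h_S(t)\neq 0$ throughout. As $h_S$ is a polynomial in $t$ with $h_S(0)=\det(I)=1>0$, the intermediate value theorem forces $h_S(1)=\det(I-AS)>0$. Since $S$ was arbitrary, (4) holds.

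The implications $(4)\Rightarrow(3)$ and $(4)\Rightarrow(2)$ are where the real work sits. The map $\varphi(z)=z+A\vert z\vert=(I+AS_z)z$ is a continuous piecewise linear (abs-normal) map whose branch on the orthant $\{z:S_zz\ge 0\}$ is the linear map $I+AS$; likewise $\psi(z)=z-A\vert z\vert=(I-AS_z)z$ has branches $I-AS$. By the sign-flip remark, (4) says exactly that all branches of $\varphi$ (and of $\psi$) have positive determinant, i.e.\ both maps are coherently oriented. I would then invoke the piecewise linear homeomorphism theorem: a coherently oriented map of this type is a global homeomorphism of $\R^n$, whence $\varphi$ and $\psi$ are bijective; this is (3), and reading $\psi(z)=b$ as unique solvability of $(I-AS_z)z=b$ it is also (2). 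Deriving this global bijectivity from the purely local determinantal data is the main obstacle: it is a genuinely topological statement, which I would prove by a degree argument (coherent orientation makes the map proper with constant nonzero local degree, hence a bijection) or by the standard induction over the orthant arrangement. This is the step I expect to be most delicate and the one I would most want to lean on the literature for.

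For the return implication $(2)\Rightarrow(1)$ I would argue by contraposition using the Perron--Frobenius theory of the sign-real spectral radius. Assume $r:=\rho^{\R}(A)\ge 1$. By Rump's theorem there is a nonnegative Perron vector, i.e.\ $u\ge 0$, $u\neq 0$, with $\vert Au\vert=r\,u$. Choosing the signature $\hat S$ with $\hat S\,Au=\vert Au\vert$ and putting $v=\hat S u$ gives $\vert v\vert=u$ and $A\vert v\vert=Au=r\,v$, with $v\neq 0$. A direct computation along the line $\R v$ yields $\psi(cv)=c(1-r)v$ for $c\ge 0$ and $\psi(cv)=c(1+r)v$ for $c\le 0$. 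Hence, when $r>1$, the value $b=-v$ has the two distinct preimages $c_1 v$ with $c_1=1/(r-1)>0$ and $c_2 v$ with $c_2=-1/(r+1)<0$ (and when $r=1$ the value $b=0$ has the distinct preimages $v$ and $0$), so $(I-AS_z)z=b$ fails to have a unique solution, violating (2). This gives $(2)\Rightarrow(1)$, and the cycle $(1)\Rightarrow(4)\Rightarrow(2)\Rightarrow(1)$ together with $(4)\Leftrightarrow(3)$ closes the equivalence. The key imported ingredient is the existence of the nonnegative sign-real Perron vector, which I would cite from \cite{rump1997theorems}.
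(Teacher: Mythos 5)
Three pieces of your proposal are sound and worth keeping: the averaging identity $2^{-n}\sum_{S\in\mathcal S_n}\det(I-AS)=\det(I)=1$ (multilinearity of $\det$ in the columns), the homotopy proof of $(1)\Rightarrow(4)$, which is correct and arguably cleaner than the paper's eigenvalue-counting argument, and the $A\mapsto -A$ symmetry relating (2) and (3) (the paper gets $(2)\Leftrightarrow(3)$ even more cheaply from the identity $z-A\lvert z\rvert = -\bigl((-z)+A\lvert -z\rvert\bigr)$). But the two implications carrying the real content both rest on statements that are false as you quote them, and in both cases what is missing is exactly the point where the special orthant structure of the map has to be used.

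$(4)\Rightarrow(2),(3)$: there is no ``piecewise linear homeomorphism theorem'' asserting that a coherently oriented, positively homogeneous piecewise linear map is bijective, and your degree argument cannot produce one. Properness plus positive branch determinants gives a well-defined degree $d\ge 1$, hence surjectivity; but then every regular value has exactly $d$ preimages, and nothing forces $d=1$. Concretely, the piecewise linear angle-doubling map of $\R^2$, built on eight $45^\circ$ cones each mapped linearly onto a quadrant, is positively homogeneous, coherently oriented and proper, yet has degree $2$ and is two-to-one on $\R^2\setminus\{0\}$. So coherent orientation plus conical structure is genuinely insufficient; the statement is true because the pieces are the $2^n$ orthants, and that is the content one must prove. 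The paper's route does use this structure: positive homogeneity reduces bijectivity of $\varphi$ to local invertibility at the origin; Clarke's inverse function theorem applies there provided every matrix in the generalized Jacobian $\operatorname{conv}\{I-AS : S\in\mathcal S_n\}=\{I-AD : D\ \text{diagonal},\ \lvert D\rvert\le I\}$ is nonsingular; and this follows from (4) because $\det(I-AD)$ is affine in each diagonal entry of $D$, so its minimum over the box is attained at a signature vertex. That last step is precisely the multilinearity behind your averaging identity --- you have the key tool in hand but deploy it only as a remark.

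$(2)\Rightarrow(1)$: Rump's theorem does not give a nonnegative vector $u$ with $\lvert Au\rvert = ru$; it gives $x\ne 0$ with $\lvert Ax\rvert = r\lvert x\rvert$, and $x$ may be forced to have mixed signs. Take $A=\left(\begin{smallmatrix}0 & 1\\ -4 & 3\end{smallmatrix}\right)$: then $\rho^{\R}(A)=4$, attained by $\operatorname{diag}(-1,1)A$ with eigenvector $x=(1,-4)^\intercal$, so $\lvert Ax\rvert = 4\lvert x\rvert$; but no $u\ge 0$, $u\ne 0$, satisfies $\lvert Au\rvert = 4u$ (for $u\ge 0$ the first equation forces $u_2=4u_1$, whence $\lvert 3u_2-4u_1\rvert = 8u_1 \ne 16u_1 = 4u_2$ unless $u=0$, and the coordinate vectors fail as well). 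Equivalently, no $v\ne 0$ satisfies $A\lvert v\rvert = 4v$, so your one-dimensional non-uniqueness construction has nothing to run on. The correct signed statement --- there exist $T\in\mathcal S_n$ and $u\ge 0$ with $\lvert ATu\rvert = ru$ --- produces non-uniqueness for the AVE with matrix $AT$, not $A$, and transporting it back to $A$ requires the invariance of (2) under $A\mapsto AT$; unlike the corresponding invariance of (1) and (4), this is not elementary, since it is essentially the equivalence being proved. The natural repair is to route the return implication through (4), as the paper does: if $\rho^{\R}(A)\ge 1$, then $\det(I-AD)=0$ for some diagonal $D$ with $\lvert D\rvert\le I$ (take $D=\pm\rho^{\R}(A)^{-1}S^*$ for a maximizing signature $S^*$), and the same per-entry affineness of $\det(I-AD)$ pushes this singularity to a vertex, yielding $\det(I-AS)\le 0$ for some $S\in\mathcal S_n$, contradicting (4).
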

We provide a brief assertion of the statements essential to our investigation. For a complete proof of Theorem \ref{unique} we refer to the afore cited references. 
\begin{proof}	
	$(1)\Rightarrow (4):$ $\rho^{\R}(A)\ <\ 1$ implies that the real eigenvalues of all $(I-A\Si), \Si\in\mathcal S_n$, are positive and no complex eigenvalue is $0$\cite{radons2016sge}. 
	
	$(2)\Leftrightarrow (3):$ Clear.
	
	$(3)\Leftrightarrow (4):$ The piecewise linear function is positively homogeneous. 
	Hence, it is bijective if and only it is bijective at the origin. 
	By Clarke's inverse function theorem this is the case if and only if all the matrices $(I-A\Si), \Si\in\mathcal S_n,$, which are the Jacobians of the selection functions of $\varphi$, have the same determinant sign. 
	This sign cannot be negative, because in the described situation all matrices in the polytope $P:=\operatorname{conv}(I-A\Si:\Si\in\mathcal S_n)$ have the same nonzero determinant sign and $P$ contains the identity.
	
	$(4)\Rightarrow (1):$ If it was $\rho^{\R}(A) \ge 1$, we could find a matrix in $P$, as defined in the last step, that was singular.
\end{proof}

There exist various other proofs for the equivalencies listed in Theorem \ref{unique}. See, e.g.,  \cite{rump1997theorems,neumaier1990interval,radons2016sge}. Moreover, note that the sign-real spectral radius is but one facet of the unified Perron-Frobenius theory developed in \cite{rump2002theorems} which extends several key properties of the Perron root of nonnegative real matrices to general real and complex matrices via the concepts of the sign-real and \textit{sign-complex spectral radius}, respectively. A unified expression for these three quantities is derived in \cite[Thm. 2.4]{rump2002theorems}:
$$
\rho^{\mathbb K}(A)\ =\ 
\max\left\{\lvert \lambda\rvert : \lvert Az\rvert = \lvert \lambda z\rvert, \lambda\in\mathbb{K}, z\in\mathbb{K}^n \right\}\ =\
\max_{0\ne x\in\mathbb K^n}\ \min_{x_i\ne 0}\ \left\vert\frac{(Ax)_i}{x_i}\right\vert\,,
$$
where $\mathbb K\in\{\R_+,\R,\mathbb C\}$ and $A\in\operatorname M_n(\mathbb K)$. 

\begin{remark}\label{bounded}
	An important fact is that $\rho^{\R}(A)$ \textit{is bounded by all $p$-norms} \cite[Thm. 2.15]{rump1997theorems}. It affirms that all systems considered in the sequel are \textit{uniquely solvable}. 
\end{remark}

\section{The unifying theorem}\label{sec:unify}
Hereafter, $\operatorname{sign}$ denotes the signum function.
The following simple observation is key to the subsequent discussion: 
\begin{proposition} Let $A\in \operatorname M_n(\mathbb R)$ and $z,b\in\mathbb R^n$ satisfy \eqref{AVE}. If $\lVert A\lVert_{\infty}<1$, then for at least one $i\in [n]$ we have $\operatorname{sign}(z_i)=\operatorname{sign}(b_i)$ . 
\end{proposition}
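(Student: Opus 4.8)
The plan is to read $\lVert A\rVert_\infty<1$ as a row-diagonal-dominance condition and exploit it at the index where $|z_i|$ is largest: there the ``diagonal'' term $z_i$ in the $i$-th equation dominates the coupling term $(A|z|)_i$, and therefore dictates the sign of $b_i$. So the whole argument reduces to a single dominance estimate, with the maximum-norm index doing the work.

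First I would dispose of the degenerate case $z=\mathbf 0$: then \eqref{AVE_std} gives $b=z-A|z|=\mathbf 0$, so $\operatorname{sign}(z_i)=0=\operatorname{sign}(b_i)$ for every $i$ and the claim holds (indeed for all indices). For the main case $z\neq\mathbf 0$, I would pick an index $i\in[n]$ attaining the maximum, $\lvert z_i\rvert=\lVert z\rVert_\infty>0$, and write out the $i$-th component of $z-A|z|=b$ as $b_i=z_i-(A|z|)_i$. The key estimate is then
$$
\lvert (A|z|)_i\rvert\ \le\ \sum_{j=1}^n \lvert A_{ij}\rvert\,\lvert z_j\rvert\ \le\ \Big(\sum_{j=1}^n \lvert A_{ij}\rvert\Big)\lVert z\rVert_\infty\ \le\ \lVert A\rVert_\infty\,\lvert z_i\rvert\ <\ \lvert z_i\rvert,
$$
where the last inequality is strict precisely because $\lVert A\rVert_\infty<1$ and $\lvert z_i\rvert=\lVert z\rVert_\infty>0$.

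Since $\lvert (A|z|)_i\rvert<\lvert z_i\rvert$, the term $z_i$ dominates the right-hand side of $b_i=z_i-(A|z|)_i$: if $z_i>0$ then $b_i\ge z_i-\lvert (A|z|)_i\rvert> z_i-\lvert z_i\rvert=0$, and if $z_i<0$ then $b_i\le z_i+\lvert (A|z|)_i\rvert< z_i+\lvert z_i\rvert=0$. In either case $\operatorname{sign}(b_i)=\operatorname{sign}(z_i)$, which proves the statement. I do not expect a genuine obstacle here; the only points requiring care are the strictness of the final inequality (which is exactly what the hypothesis $\lVert A\rVert_\infty<1$ buys us, as opposed to the non-strict $\le 1$) and fixing the sign convention at zero so that the degenerate case $z=\mathbf 0$ is covered cleanly.
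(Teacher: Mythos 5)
Your proof is correct and follows essentially the same route as the paper's: both pick an index $i$ with $\lvert z_i\rvert=\lVert z\rVert_\infty$, dispose of the degenerate case $z=\mathbf 0$, and use the bound $\lvert e_i^\intercal A\vert z\rvert\,\rvert\le\lVert A\rVert_\infty\lvert z_i\rvert<\lvert z_i\rvert$ to conclude that $b_i=z_i-e_i^\intercal A\vert z\vert$ inherits the sign of $z_i$. You merely spell out the dominance estimate and the two sign cases in more detail than the paper does.
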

\begin{proof}
	Let $z_i$ be an entry of $z$ s.t. $\vert z_i\vert \ge \vert z_j\vert\ $ for all $j\in [n]$. If $z_i=0$, then $z=\mathbf 0$ and thus $b\equiv z-A\vert z\vert =\mathbf{0}$, and the statement holds trivially. If $\vert z_i\vert >0$, then $\left| e_i^\intercal A\vert z\vert \right| < \vert z_i\vert$, due to the norm constraint on $A$. Thus, $b_i = z_i - e_i^\intercal A\vert z\vert$ will adopt the sign of $z_i$. 
\end{proof}

We do not know though, for which indices the signs coincide. The theorem below states restrictions on $A$ which guarantee the coincidence of the signs of $z_i$ and $b_i$ for all $i\in [n]$ where $\vert b_i\vert = \|b\|_\infty$ and thus provide the basis for the convergence proofs in Sections \ref{sgeSection} and \ref{snSection}. For $b\in \mathbb R^n$ we set 
$$
\mathcal I^b_{\max}\ \equiv \left\{\, i\in[n] \ : \ \vert b_i\vert = \| b\|_\infty\, \right\}\; ,
$$ 
and define 
$$\operatorname{Neq}(A,b,z)\ \equiv\ \{ i\in \mathcal I^b_{\max}\ :\ \operatorname{sign}(b_i) \ne \operatorname{sign}(z_i) \}\; .$$

\begin{theorem}\label{citeSGE}
	Let $A\in\operatorname M_n(\mathbb R)$ and $b, z\in \mathbb R^n$ such that \eqref{AVE} is satisfied. Then the we have 
	$$\operatorname{Neq}(A,b,z)\ =\ \emptyset$$
	if either of the following conditions is satisfied.
	\begin{enumerate}
		\item $\lVert A\lVert_{\infty}\ <\ \frac 12$\,.
		\item $A$ is irreducible, and $\lVert A\lVert_{\infty}\ \le\ \frac 12$\,.
		\item $A$ is strictly diagonally dominant and $\Vert A\Vert_\infty \le \frac 23$\,.
		\item $\lvert A\rvert$ is tridiagonal, symmetric, $\Vert A\Vert_\infty <1$, and $n\ge 2$\,.
	\end{enumerate}
\end{theorem}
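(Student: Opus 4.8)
The plan is to argue by contradiction from a single ``bad'' index, after reducing to a normal form by a signature conjugation. Set $w=\lvert z\rvert$ and $m=\lVert z\rVert_\infty$; if $z=\mathbf 0$ then $b=\mathbf 0$ and $\operatorname{Neq}(A,b,z)=\emptyset$ trivially, so I assume $m>0$. Suppose $\operatorname{Neq}(A,b,z)\ne\emptyset$ and fix $i$ with $\lvert b_i\rvert=\lVert b\rVert_\infty=:\beta$ and $\operatorname{sign}(b_i)\ne\operatorname{sign}(z_i)$; then $\beta>0$. All four hypotheses depend on $A$ only through $\lvert A\rvert$, and conjugating $A\mapsto TAT$ by a signature $T\in\mathcal S_n$ leaves $\lvert A\rvert$ (hence every hypothesis) unchanged while turning \eqref{AVE} into $Tz-(TAT)\lvert Tz\rvert=Tb$. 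Choosing $T$ with $T_{ii}=\operatorname{sign}(b_i)$ I may therefore assume $b_i>0$, which forces $z_i\le 0$. Writing the $i$-th equation as $b_i=z_i-(Aw)_i$ and using $z_i\le 0<b_i$ gives the two estimates reused throughout: $(Aw)_i\le-\beta$, so $\lvert(Aw)_i\rvert=\beta+w_i$; and, since $\lvert(Aw)_k\rvert\le\lVert A\rVert_\infty m$ at any index $k$ with $w_k=m$, the bound $\beta\ge(1-\lVert A\rVert_\infty)m$.

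For the first three parts I combine $\lvert(Aw)_i\rvert=\beta+w_i\ge(1-\lVert A\rVert_\infty)m+w_i$ with an upper bound on $\lvert(Aw)_i\rvert$. Bounding $\lvert(Aw)_i\rvert\le\lVert A\rVert_\infty m$ yields $w_i\le(2\lVert A\rVert_\infty-1)m$. Under (1) the right-hand side is negative, a contradiction. Under (2) it forces $\lVert A\rVert_\infty=\tfrac12$ and $w_i=0$, with every intermediate inequality tight, in particular $\beta=\tfrac m2$; then at each index $k$ with $w_k=m$ one has $\lvert b_k\rvert\ge m-\lvert(Aw)_k\rvert\ge\tfrac m2=\beta$, forcing $\lvert(Aw)_k\rvert=\lVert A\rVert_\infty m$, which in turn forces $w_\ell=m$ for every $\ell$ with $A_{k\ell}\ne0$. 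Thus $M=\{\ell:w_\ell=m\}$ has no outgoing edges in the digraph of $A$; since $M\ne\emptyset$, irreducibility gives $M=[n]$, contradicting $w_i=0$. Under (3) I split off the diagonal, $\lvert(Aw)_i\rvert\le\lvert A_{ii}\rvert w_i+r_i m$ with $r_i=\sum_{j\ne i}\lvert A_{ij}\rvert$; as $\lvert A_{ii}\rvert<1$ this gives $r_i\ge 1-\lVert A\rVert_\infty\ge\tfrac13$, whereas strict diagonal dominance together with $\lvert A_{ii}\rvert+r_i\le\lVert A\rVert_\infty\le\tfrac23$ forces $2r_i<\tfrac23$, a contradiction.

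\textbf{Part (4) is the crux.} Since $\lvert A\rvert$ is tridiagonal its nonzero pattern is a disjoint union of paths, so \eqref{AVE} decouples into independent tridiagonal systems; an index with no off-diagonal neighbour cannot be bad, because there $b_i=-(1+A_{ii})w_i\le 0$. Hence I may assume $A$ is a single irreducible path $1\!-\!2\!-\!\cdots\!-\!N$, and symmetry of $\lvert A\rvert$ gives $\lVert A\rVert_1=\lVert A\rVert_\infty<1$, so column sums are also below $1$. First I show the bad index has a neighbour of strictly larger modulus: if $w_{i-1}\le w_i$ and $w_{i+1}\le w_i$, then bounding the $i$-th equation gives $\beta=b_i\le(\lVert A\rVert_\infty-1)w_i\le0$, impossible. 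The base comparison is the two-index case: when the larger neighbour $k$ is itself of maximal modulus, writing $b_i$ and $b_k$ out exactly and using $\lvert A_{ik}\rvert=\lvert A_{ki}\rvert$ together with the row/column-sum bound $<1$ shows $\lvert b_k\rvert-\beta$ is a positive combination of $w_i$ and $w_k$, so $\lvert b_k\rvert>\beta$, contradicting maximality. For a longer path the same computation propagates: enforcing $\lvert b_j\rvert\le\beta$ at each step forces the next modulus $w_{j+1}$ to be strictly larger than those already seen, so the inequality marches monotonically along the chain toward a maximal-modulus index or an endpoint, where it must finally be violated.

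The main obstacle is precisely this propagation in (4). With only $\lVert A\rVert_\infty<1$ available---rather than the margins below $\tfrac12$ or $\tfrac23$ exploited in (1)--(3)---the crude norm bounds never beat $\beta$, so the argument must use the \emph{exact} entrywise equations and let the symmetric cross-terms $\lvert A_{j,j+1}\rvert=\lvert A_{j+1,j}\rvert$ telescope. The delicate point is bookkeeping the accumulated signs of the $z_j$ and of the off-diagonal entries $A_{j,j\pm1}$ over several steps, so that these cross-terms reinforce rather than cancel; controlling this, and handling the two-sided coupling at interior nodes, is where the symmetry of $\lvert A\rvert$ and the uniform bound $\lVert A\rVert_\infty<1$ do the real work.
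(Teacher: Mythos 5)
Your arguments for parts (1)--(3) are correct, and they take a genuinely different route from the paper: there, (1) and (2) are deduced from Lemma \ref{irred} (strict diagonal dominance of $(I-A)^{-1}$, proved via the Neumann series and, in the boundary case $\rho(A)=\frac{1}{2}$, Wielandt's theorem), and (3) is cited from earlier work, whereas your local estimates at a bad index settle all three cases in a short, self-contained way. One repair is needed: signature \emph{conjugation} does not act on the AVE as you claim, since $Tz-(TAT)\lvert Tz\rvert=Tz-TAT\lvert z\rvert$ is in general not equal to $T(z-A\lvert z\rvert)$. Use left multiplication instead: $Tz-(TA)\lvert Tz\rvert=T(z-A\lvert z\rvert)=Tb$, and $\lvert TA\rvert=\lvert A\rvert$, so all four hypotheses and the set $\operatorname{Neq}$ are unchanged and your normalization $b_i>0\ge z_i$ is legitimate.

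Part (4) contains a genuine gap, and it cannot be closed. The ``base comparison'' you invoke---if the larger neighbour $k$ of the bad index $i$ has maximal modulus, then $\lvert b_k\rvert>\beta$---is valid only when $i$ is an endpoint or its other neighbour $i'$ satisfies $w_{i'}\le w_i$: writing $\lvert b_k\rvert-\beta$ out exactly produces, besides your positive terms, the cross term $-\lvert A_{ii'}\rvert w_{i'}$, which can cancel them completely. The subsequent ``propagation'' is only asserted, as you yourself concede. In fact no bookkeeping can repair it, because statement (4) is false. Let
\begin{equation*}
A\;=\;-\frac{1}{10}\begin{pmatrix}5&4&0\\4&0&4\\0&4&5\end{pmatrix},\qquad
z\;=\;\begin{pmatrix}-2\\-1\\-2\end{pmatrix},\qquad
b\;=\;z-A\lvert z\rvert\;=\;\frac{3}{5}\begin{pmatrix}-1\\1\\-1\end{pmatrix},
\end{equation*}
which one checks directly: $A\lvert z\rvert=-\frac{1}{10}(14,16,14)^\intercal$, so $b=(-\frac{3}{5},\frac{3}{5},-\frac{3}{5})^\intercal$. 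Here $\lvert A\rvert$ is tridiagonal and symmetric (indeed $A$ itself is symmetric), $\lVert A\rVert_\infty=\frac{9}{10}<1$ and $n=3\ge 2$, and by Remark \ref{bounded} this $z$ is the unique solution for this $b$; yet $2\in\mathcal I^b_{\max}$ and $\operatorname{sign}(b_2)=1\ne-1=\operatorname{sign}(z_2)$, so $\operatorname{Neq}(A,b,z)=\{2\}\ne\emptyset$. This is exactly the configuration your march cannot reach: the bad index is interior, both neighbours have strictly larger (indeed maximal) modulus, and $\lvert b_1\rvert=\lvert b_3\rvert=\beta$ exactly. For what it is worth, the paper's own proof of (4) fails at the same spot: its ``without loss of generality'' step places the bad index first and a maximal-modulus entry second-to-last, but a tridiagonal matrix admits only the reversal permutation, so that normalization is unavailable, and the inductive elimination (which requires $\zeta\in[0,1]$) breaks down on the example above. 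So your instinct that (4) is the crux was right; the correct conclusion, however, is that hypothesis (4) must be dropped or strengthened (this affects condition (4) in Theorems \ref{sgeCorr} and \ref{snConv} as well), not that a finer telescoping argument will finish the proof.
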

The first three points are cited from \cite[Thm. 3.1]{radons2016sge}. We will prove the fourth point and reprove the first two by somewhat more elegant means than in the latter reference. This includes a new proof for the following lemma.

\begin{lemma}(\cite[Lem. 3.2]{radons2016sge}) \label{irred}
	Let $A\in M_n(\R)$
	with $\|A\|_\infty < \frac12$ or irreducible with norm $\|A\|_\infty\le \frac12$.
	Then the inverse of $B=I-A$ is strictly diagonally dominant
	and has a positive diagonal.
\end{lemma}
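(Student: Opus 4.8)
The plan is to represent $C := B^{-1}$ through its Neumann series and to control it by the nonnegative majorant obtained from $|A|$. Since $\|A\|_\infty<1$, both $C = (I-A)^{-1} = \sum_{k\ge 0} A^k$ and $\widetilde C := (I-|A|)^{-1} = \sum_{k\ge 0}|A|^k$ converge, with $\widetilde C\ge \mathbf 0$ entrywise. From $|(A^k)_{ij}|\le (|A|^k)_{ij}$ one reads off the two elementary comparisons that drive the whole argument: $|C_{ij}|\le \widetilde C_{ij}$ for all $i,j$, and, isolating the $k=0$ term on the diagonal, $C_{ii}\ge 2-\widetilde C_{ii}$ (equivalently $C_{ii}+\widetilde C_{ii}\ge 2$).

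Next I would set $\widetilde u := \widetilde C\,\ones$, so that $\widetilde u = \ones + |A|\widetilde u$; since every row sum of $|A|$ is at most $\frac12$, a one-line maximum argument gives $\widetilde u_i\le 2$ for all $i$ (and $\widetilde u_i\le \tfrac{1}{1-\|A\|_\infty}<2$ when $\|A\|_\infty<\tfrac12$). The core estimate is then, for each $i$,
\[
\Delta_i \;:=\; C_{ii}-\sum_{j\ne i}|C_{ij}|\;\ge\; C_{ii}-\sum_{j\ne i}\widetilde C_{ij}\;=\;C_{ii}+\widetilde C_{ii}-\widetilde u_i .
\]
Together with $C_{ii}+\widetilde C_{ii}\ge 2\ge \widetilde u_i$ this already yields $\Delta_i\ge 0$, and since $\Delta_i>0$ forces $C_{ii}>\sum_{j\ne i}|C_{ij}|\ge 0$, both strict diagonal dominance and a positive diagonal will follow at once the moment I can make the inequality strict.

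In the first regime, $\|A\|_\infty<\tfrac12$, strictness is immediate: $\widetilde u_i<2\le C_{ii}+\widetilde C_{ii}$ gives $\Delta_i>0$ for every $i$. The substance of the lemma — and the step I expect to be the main obstacle — is the irreducible borderline case $\|A\|_\infty=\tfrac12$, where $\widetilde u_i$ may equal $2$ and the estimate only delivers $\Delta_i\ge C_{ii}+\widetilde C_{ii}-2$. There it remains to prove the strict inequality $C_{ii}+\widetilde C_{ii}>2$, i.e. to exclude the extremal situation $C_{ii}+\widetilde C_{ii}=2$. By the comparison $C_{ii}\ge 2-\widetilde C_{ii}$ this extremal case forces $(A^k)_{ii}=-(|A|^k)_{ii}$ for every $k\ge 1$, i.e. a complete saturation of the triangle inequality at each power together with a negative sign on the diagonal.

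The idea for the contradiction is a parity argument over closed walks, exploiting irreducibility. Interpreting $(A^k)_{ii}=\sum_W w_W$ as a sum of signed weights over closed walks $W$ of length $k$ based at $i$, the equality $(A^k)_{ii}=-(|A|^k)_{ii}$ says that every such walk satisfies $w_W\le 0$. Irreducibility supplies at least one closed walk $\gamma$ through $i$ of some positive length $p$ with $w_\gamma\ne 0$; traversing $\gamma$ twice is a closed walk of even length $2p$ whose weight is $w_\gamma^{\,2}>0$, contradicting the forced sign at level $k=2p$. This rules out the extremal case, yields $C_{ii}+\widetilde C_{ii}>2$, and hence $\Delta_i>0$ for all $i$, completing the proof. (The case $n=1$ is trivial, the off-diagonal sum being empty.)
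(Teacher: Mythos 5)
Your proof is correct, and in the only genuinely hard regime it takes a different route from the paper's. Both arguments open with the Neumann series, and for $\|A\|_\infty<\frac12$ your estimate $\Delta_i\ge 2-\widetilde u_i\ge 2-\frac{1}{1-\|A\|_\infty}>0$ is essentially the paper's bound $\|A(I-A)^{-1}\|_\infty\le\frac{\|A\|_\infty}{1-\|A\|_\infty}<1$ in different clothing. The divergence is in the irreducible borderline case: the paper splits it according to $\rho(A)<\frac12$ versus $\rho(A)=\frac12$, settles the first subcase by a refined geometric-series estimate, and for the second invokes Wielandt's theorem (equality $\rho(A)=\rho(|A|)=\|A\|_\infty$ forces $A=s\,T^{-1}|A|\,T$, hence $|A^k|=|A|^k$), after which irreducibility yields a cycle with $(|A|^{k_i})_{ii}>0$ and a saturation-of-the-triangle-inequality argument gives $(A^{2k_i})_{ii}>0$, which is then fed into a separation inequality. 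You bypass both the spectral dichotomy and Wielandt: the majorant $\widetilde C=(I-|A|)^{-1}$ reduces strictness to excluding $(A^k)_{ii}=-(|A|^k)_{ii}$ for all $k\ge1$, and your walk-doubling argument (the concatenated walk $\gamma\gamma$ of even length $2p$ has weight $w_\gamma^2>0$) excludes this by pure combinatorics. The two proofs share the same germ---positivity of an even-order diagonal term obtained by squaring---but yours works at the level of a single closed walk rather than a full matrix power, which is why it needs no spectral input; it also delivers the positive diagonal as a free corollary of strict dominance, whereas the paper derives it separately via $\det(I-A')>0$ for principal submatrices and Cramer's rule. What the paper's heavier machinery buys is the link to the Perron--Frobenius/Wielandt circle of ideas and explicit norm estimates in the $\rho(A)<\frac12$ subcase; what yours buys is a shorter, self-contained, case-free treatment of the borderline.
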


\begin{proof}

	As $\|A\|_\infty\le\frac12<1$, the inverse of $(I-A)$ exists and can
	be expressed via the Neumann series
	$$
	(I-A)^{-1}=I+\sum_{k=1}^\infty A^k=I+A(I-A)^{-1}
	\text{ with }
	\|A(I-A)^{-1}\|_\infty\le \frac{\|A\|_\infty}{1-\|A\|_\infty}\le 1.
	$$
	This already proves strict diagonal dominance for $\|A\|_\infty<\frac12$.
	Moreover, due to $\lVert A\rVert_\infty<1$, we have $\rho(A')<1$ for any principal submatrix $A'$ of $A$ (including the case $A'=A$). 
	This implies that the real part of all eigenvalues of $I-A'$ is positive. 
	Consequently, all real eigenvalues of $I-A'$ are positive and no complex eigenvalue is $0$. 
	Since the complex eigenvalues of real marices appear in conjugate pairs, whose product is positive as well, we get $\det(I-A')>0$.
	The positivity of the diagonal of $(I-A)^{-1}$ now follows from Cramer's rule.
	
	To further explore the diagonal dominance of a matrix sum $I+M+R$, 
	where we will use
	$M=A^m$ and $R=\sum_{k\ne m}A^k$, we
	bound the gap in the inequality below as
	\begin{align}\label{eq:digdomsep}
	|1+M_{ii}+R_{ii}|-\sum_{j\ne i}|M_{ij}+R_{ij}|
	&\ge |1+M_{ii}|-|R_{ii}|-\sum_{j\ne i}(|M_{ij}|+|R_{ij}|)
	\notag\\
	&=|1+M_{ii}|+|M_{ii}|-\sum_{j=1}^n(|M_{ij}|+|R_{ij}|)\; .
	\end{align}
	Thus we get strict diagonal dominance in row $i$ both for 
	$\|M\|_\infty+\|R\|_\infty<1$
	and in the case of $\|M\|_\infty+\|R\|_\infty=1$ and $M_{ii}>0$,  where 
	the partition
	$A(I-A)^{-1}=M+R$ can be chosen differently for every $i=1,\dots,n$.

	If  $\rho(A)<\frac12$, then
	$(2A)^k$ converges toward zero, so that there is some $K$ with 
	$\|(2A)^K\|\le\frac12 \ (\Leftrightarrow \|A^K\|_\infty \leq 2^{-(K+1)})$ and thus
	$$
	\sum_{k=1}^\infty \|A^k\|_\infty\
	\le \
	\frac{\|A\|_\infty}{1-\|A\|_\infty}\,\frac{1-\|A\|_\infty^K}{1-\|A^K\|_\infty}\
	\le\
	\frac{1-2^{-K}}{1-2^{-(K+1)}}\
	<\ 1\; ,
	$$
	ensuring strict diagonal dominance of $(I-A)^{-1}$.
	The left inequality follows from 
	$$
	\sum_{k=1}^\infty \|A^k\|_\infty\ \le\ \sum_{k=1}^\infty \|A\|_\infty^k\ =\ \|A\|_\infty \sum_{k=0}^\infty \|A\|_\infty^k\ =\ \frac{\|A\|_\infty}{1-\|A\|_\infty}\,.
	$$
	The right one follows from 
	$$
	\frac{\|A\|_\infty(1-\|A\|_\infty^K)}{1-\|A\|_\infty}\ =\ \sum_{k=1}^K\|A\|_\infty^k\ < \ 1-2^{-K}
	$$
	and the inequality $1 - \|A^K\|_\infty \geq 1 - 2^{-(K+1)}$, which holds by hypothesis.
		
	In the case $\rho(A)=\frac12$ the assumptions of the Wielandt theorem \cite{meyer2000matrix, wielandt1950matrix} are satisfied,
	$\rho(A)=\rho(|A|)=\|A\|_\infty$, such that there
	is a sign $s$ and a signature matrix $\Si={\rm diag}(\si_1,\dots,\si_n)$ 
	with
	$|s|=|\si_i|=1$, $i=1,\dots,n$, so that
	$
	A=s\,T^{-1}\,|A|\,T
	$
	and thus for the  powers of $A$
	$$
	\left|A^k\right|=\left|\,s^k T^{-1}\,|A|^k\,T\,\right|=|A|^k.
	$$
	The diagonal elements of $|A|^k$ are sums of products over $k$-cycles of 
	non-negative elements.
	Since $|A|$ is irreducible there is at least one $k_i$-cycle, $k_i\in [n]$, for each
	diagonal element at position $i$. Thus we find 
	$(|A|^{k_i})_{ii}=|(A^k)_{ii}|>0$.
	For the square of that power we note that the diagonal element satisfies 
	the identity
	\begin{equation}
	\left|\sum_{j=1}^n(A^{k_i})_{ij}(A^{k_i})_{ji}\right|
	=\left(\left|A^{2k_i}\right|\right)_{ii}
	=\left(\left|A^{k_i}\right|^2\right)_{ii}
	=\sum_{j=1}^n(|A^{k_i}|)_{ij}(|A^{k_i}|)_{ji} \; .
	\end{equation}
	
	By the triangle inequality, the identity of the leftmost and rightmost 
	terms is only possible if
	all the terms in the sum on the left have the same sign. As 
	$(A^{k_i})_{ii}^2>0$, all those terms 
	are positive and consequently $(A^{2k_i})_{ii}>0$, which proves diagonal dominance in 
	row $i$
	by setting $M=A^{2k_i}$ and $R=\sum_{m\ne 2k_i}A^m$ in the separation 
	inequality \eqref{eq:digdomsep}.
	This can be done for any index thus proving overall diagonal dominance 
	of $(I-A)^{-1}$.
\end{proof}

\begin{proof}({\bf Theorem \ref{citeSGE}}) 		
	$(1)$ and $(2)$:  Let $A\in M_n(\R)$
	with $\|A\|_\infty < \frac12$ or irreducible with norm $\|A\|_\infty\le \frac12$. Moreover, assume set $\Si_z\equiv\Si$. Then $(I-AS)^{-1}$ is strictly diagonally dominant by Lemma \ref{irred} since both irreducibility and the norm constraint are invariant under multiplications of $A$ by a signature matrix. Hence, $z_i=e_i^\intercal(I-AS)^{-1}b$ will adopt the sign of $b_i$ for all $i\in\mathcal I^b_{\max}$.  
	
	$(3)$: See \cite{radons2016sge}.
	
	$(4)$: The proof proceeds by induction. The $(2\times 2)$-case can be verified by direct computation. (We note that it is the only part of the proof that makes use of the symmetry of $A$.) Now assume the statement of the theorem holds for an $N\ge 2$, but the tuple $(A,z,b)$ contradicts it in dimension $N+1$. 
	Since $ \lVert A\lVert_{\infty} <1$, it is $\operatorname{sign}(z_i)=\operatorname{sign}(b_i)$ for all $i\in[n]$, if $\vert z_1\vert=\dots =\vert z_n\vert$. Thus we will assume that not all entries of $z$ have the same absolute value.

	For all entries $z_j$ of $z$ whose abolute value is maximal in $z$, we then have $\sum_j \vert a_{ij} z_j\vert < \vert z_i\vert$ and hence $\operatorname{sign}(b_i) = \operatorname{sign}(z_i)$.
	Consequently, if there existed a tuple $(A,z,b)$ which violated the claim of the theorem, for any $i \in \operatorname{Neq}(A,b,z)$ the absolute value of $z_i$ would not be maximal in $z$. 
	
	As $N+1\geq 3$, we can assume without loss of generality that $|z_N|=\lVert z\rVert_\infty$, while the first row holds the contradiction, so that we have $|b_1|=\lVert b_1\rVert_\infty$ and $\sign(b_1)\neq\sign(z_1)$.
	Then there exists a scalar $\zeta \in \left[0, 1 \right]$ such that 
	$$
	\zeta \cdot \vert z_N\vert = \vert z_{N+1}\vert\quad \Longrightarrow\quad \zeta\cdot a_{N,N+1} \cdot \vert z_N\vert = a_{N,N+1}\cdot \vert z_{N+1}\vert.
	$$ 
	Let $A\in \operatorname M_{N+1}(\R)$ and denote by $A_{N+1,N+1}$ the matrix in $M_N(\R)$ that is derived from $A$ by removing its $(N+1)$-th row and column. 
	Then $$\bar A\ \equiv\ A_{N+1,N+1} + \operatorname{diag}(0,\dots, 0, \zeta\cdot a_{N,N+1})$$ is still tridiagonal with $\lVert A\lVert_{\infty}<1$ and $\vert A\vert$ symmetric. 
	Further, for $\bar z\equiv (z_1,\dots, z_N)^\intercal$ and $\bar b\equiv (b_1,\dots, b_N)^\intercal$ we have $$\bar z-\bar A\vert \bar z\vert\ =\ \bar b.$$ 
	Hence, the tuple $(\bar A,\bar z, \bar b)$ contradicts the induction hypothesis for dimension $N$, as we still have $1\in\operatorname{Neq}(\bar A,\bar z, \bar b)$. 
\end{proof}

\section{Signed Gaussian elimination}\label{sgeSection}

If one is sure of the sign $\si_k$ of $z_k$ one can remove this variable from the right hand side of the AVE.
Let $A_{\ast k}$ denote the $k$-th column $Ae_k$ and $A_{j\ast}$ the $j$-th row $e_j^\intercal A$.
Then the removal of the variable is reflected in the formula 
$$
(I-A_{\ast k}e_k^\intercal\si_k)z = b + (A-A_{\ast k}e_k^\intercal)|z|.
$$
The inverses of rank-1-modifications are well-known to be (see, e.g. \cite{bartlett1951smw})
\begin{align*}
(I-uv^\intercal)^{-1}=I+\frac1{1-v^\intercal u}uv^\intercal\,.
\end{align*}
Thus it is
easy to remove the matrix factor on the left side. We then have
\begin{align}\label{reduction1}
z&=\bar b + \bar A|z|\; ,
\end{align}

\vspace{-.3cm}

\noindent where
\begin{align*}
\bar b = b + \frac1{1-A_{kk}\si_k}\si_kA_{\ast k}\,b_k\quad\text{and}\quad \bar A = A_{red}+ \frac1{1-A_{kk}\si_k}\si_kA_{\ast k}\,(A_{red})_{k\ast}\; ,
\end{align*}
with
\begin{align*}
A_{re d}=A-A_{\ast k}e_k^\intercal=A(I-e_ke_k^\intercal)\; .
\end{align*}
In {\sc Python} this can be achieved as:
\vspace{.2cm}
\renewcommand{\lstlistingname}{Function}
\begin{lstlisting}[caption=Sign Controlled Elimination Step, label=sgeStep]
def elim(A,b,k,sigk):
sk = A[:,k]*sigk; 
A[:,k] = 0;
sk = sk/(1-sk[k])
b = b + sk*b[k];
A = A + sk*A[k,:];
\end{lstlisting}
\vspace{.3cm}
This procedure corresponds to one step of Gaussian elimination. Now let $J\subseteq [n]$ be an index set and define   
$$\operatorname{J\_b}\ \equiv \{ i\in J\ : \ \vert b_i\vert \ge \vert  b_j\vert\ \forall j\in J \}\; .$$
Using this convention we can give the pseudocode of a slight modification of the algorithm that was introduced as \textit{signed Gaussian elimination} (SGE) in \cite{radons2016sge}: 

\renewcommand{\lstlistingname}{Algorithm}
\begin{lstlisting}[caption=Signed Gaussian Elimination, label=sgePseudo]
sge(A,b):
set J = [n];
while (#J > 1) do:
determine bmaxJ; 
forall k in J_b set sigk = sig(bk);
forall k in J_b elim(A,b,k,sigk);
J = J\ bmaxJ;
endwhile
perform reverse substitution for (I-A)z = b;
return z
\end{lstlisting}

\begin{theorem}\label{sgeCorr}
	Let $A\in\operatorname M_n(\R)$ and $z, b\in \R^n$ such that \eqref{AVE_std} is satisfied. If $A$ conforms to any of the conditions listed in Theorem \ref{citeSGE}, then the signed Gaussian elimination computes the unique solution of the AVE \eqref{AVE_std} correctly.
\end{theorem}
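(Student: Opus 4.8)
The plan is to run a loop invariant on the active set $J$: after each \texttt{while}-iteration the current data restricted to $J$ form a genuine AVE $\bar z=\bar b+\bar A\lvert\bar z\rvert$ whose unique solution (Remark~\ref{bounded}) is the restriction $\bar z=z|_J$ of the true solution. Maintaining this structural part is automatic once signs are correct: at the top of an iteration $\operatorname{J\_b}=\mathcal I^{\bar b}_{\max}$, so \emph{provided} the conclusion $\operatorname{Neq}(\bar A,\bar b,\bar z)=\emptyset$ holds we get that each sign $\operatorname{sign}(\bar b_k)$ recorded in the loop equals $\operatorname{sign}(\bar z_k)=\operatorname{sign}(z_k)$; and with the correct sign the rank-one update \eqref{reduction1} is an equivalence transformation, so eliminating the indices of $\operatorname{J\_b}$ yields exactly the reduced AVE for the next iteration. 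The whole proof thus reduces to showing $\operatorname{Neq}(\bar A,\bar b,\bar z)=\emptyset$ at every reduced problem.

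The technical backbone, common to all four hypotheses, is that one sign-controlled elimination does not increase the $\infty$-norm. For a pivot $k$ with the correct sign $\operatorname{sign}(z_k)$ and any row $i\ne k$, bounding $\lvert 1-\operatorname{sign}(z_k)A_{kk}\rvert\ge 1-\lvert A_{kk}\rvert>0$ gives $\sum_{j\ne k}\lvert\bar A_{ij}\rvert\le\|A\|_\infty+\lvert A_{ik}\rvert\,\tfrac{\|A\|_\infty-1}{1-\lvert A_{kk}\rvert}$, and since $\|A\|_\infty<1$ in every case the correction term is non-positive, so $\|\bar A\|_\infty\le\|A\|_\infty$; restricting to the principal submatrix on $J$ only lowers row sums. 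This already closes case~(1) and furnishes the norm bound in (2)--(4).

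It remains to establish the conclusion for each structural hypothesis. For (1) and (2) the slick route avoids re-deriving irreducibility after fill-in: writing $\Si=\Si_z$, a direct computation identifies $I-\bar A\,\Si$ (on $J$) as the Schur complement of $I-A\,\Si_z$ with respect to the eliminated index, so by the quotient property for a whole batch $\operatorname{J\_b}$ the inverse $(I-\bar A\,\Si)^{-1}$ is the principal submatrix of $(I-A\,\Si_z)^{-1}$ on $J$. Lemma~\ref{irred} makes $(I-A\,\Si_z)^{-1}$ strictly diagonally dominant with positive diagonal, a property inherited by every principal submatrix, and the sign-extraction argument used for parts (1)--(2) of Theorem~\ref{citeSGE} then delivers $\operatorname{sign}(\bar b_i)=\operatorname{sign}(\bar z_i)$ at each step. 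For (3) I would verify strict diagonal dominance of $\bar A$ directly: after bounding $\lvert\bar A_{ii}\rvert$ from below and $\sum_{j\ne i,k}\lvert\bar A_{ij}\rvert$ from above, the required inequality collapses to $\bigl(\lvert A_{ii}\rvert-\sum_{j\ne i}\lvert A_{ij}\rvert\bigr)+\lvert A_{ik}\rvert\bigl(1-\tfrac{\sum_{j\ne k}\lvert A_{kj}\rvert}{1-\lvert A_{kk}\rvert}\bigr)>0$, which holds because $A$ is SDD and $\lvert A_{kk}\rvert+\sum_{j\ne k}\lvert A_{kj}\rvert\le\tfrac23<1$; together with the preserved norm bound $\le\tfrac23$ this keeps us in case~(3). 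For (4) the point is combinatorial: deleting a vertex of a path graph re-links its two neighbours, so the pattern stays a path (tridiagonal), the single new off-diagonal pair inherits the symmetry of $\lvert A\rvert$, and $\|\bar A\|_\infty<1$ persists; batched elimination merely composes single steps.

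When the loop ends with $\lvert J\rvert\le 1$ all signs have been fixed correctly, the surviving scalar relation together with the recorded eliminations is a triangular linear system, and reverse substitution returns the unique solution (Remark~\ref{bounded}). I expect the maintenance step to be the crux, and within it the two delicate points are the Schur-complement/quotient bookkeeping that lets case~(2) bypass any preservation of irreducibility under fill-in, and the verification that strict diagonal dominance (case~3) and the symmetric tridiagonal pattern (case~4) really do survive a sign-controlled elimination and its repetition over a batch.
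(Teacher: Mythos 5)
Your proposal is correct, and at the skeleton level it coincides with the paper's proof: Theorem~\ref{citeSGE} certifies the sign picks, a sign-correct elimination is an equivalence transformation, and correctness follows by recursion down to the scalar level, provided the hypotheses survive each elimination. The difference lies in what is actually established. The paper's entire maintenance argument is the assertion that conditions (1)--(4) are ``clearly invariant'' under the sign-controlled elimination step; you prove this where it is true and replace it where it is doubtful. Your norm computation ($\|\bar A\|_\infty\le\|A\|_\infty$ whenever $\|A\|_\infty<1$), your direct verification that strict diagonal dominance is preserved (case 3), and your path-graph argument for the symmetric tridiagonal pattern (case 4) are all correct, and they are exactly the facts the paper leaves unproved. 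The genuinely different step is case (2): irreducibility is in fact \emph{not} clearly invariant, since the fill-in entry $A_{ij}+s_kA_{ik}A_{kj}/(1-A_{kk}s_k)$ can vanish by exact cancellation, after which the reduced matrix may be reducible while its norm is still exactly $\tfrac12$, so that neither condition (1) nor condition (2) applies to it and the paper's recursion has nothing to invoke. Your Schur-complement/quotient-property route --- the reduced $(I-\bar A S)^{-1}$ is a principal submatrix of $(I-AS_z)^{-1}$, and strict diagonal dominance with positive diagonal (Lemma~\ref{irred}) passes to principal submatrices --- delivers $\operatorname{Neq}(\bar A,\bar b,\bar z)=\emptyset$ for every reduced system without ever needing irreducibility of the reduced matrix, and it handles batched pivots through the quotient property. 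In short, the paper buys brevity at the price of an invariance claim that can fail in degenerate cases; your version costs more work but actually closes the recursion, and for case (2) it is the argument one should give.
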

\begin{proof}It was already noted that criteria $(1)$-$(4)$ imply the unique solvability of the AVE (Remark \ref{bounded}). We may thus focus on proving the correctness of the algorithm: 
	
	Theorem \ref{citeSGE} ensures the correctness of the sign-picks. The conditions listed in Theorem \ref{citeSGE} are clearly invariant under the (sign controlled) elimination step. Hence, the argument applies recursively down to the scalar level.
\end{proof}
For dense $A$ the SGE has a cubic computational cost. For $A$ with band structure it was shown in \cite{radons2016sge} that the computation has the asymptotic cost of sorting $n$ floating point numbers. Moreover, note that the SGE is numerically stable, since $I-AS$ is strictly diagonally dominant if $\Vert A\Vert_\infty<1$.

For counterexamples which demonstrate the sharpness of the conditions $(1)$-$(3)$ in Theorem \ref{citeSGE} with respect to the SGE's correctness, see \cite{radons2016sge}. 
Concerning condition $(4)$, let $A\equiv (1+\varepsilon)I$, where $\varepsilon>0$ is arbitrarily small.
Then we have $\lVert A\rVert_\infty = 1+\varepsilon$ and the SGE will pick the wrong first sign for arbitrary right hand sides $b$.

\section{Full step Newton method}\label{snSection}
In this section we analyze the full step Newton method (FN) which is defined by the recursion 

\vspace{-.6cm}

\begin{align}\label{newtonz}
z^{k+1}\ =\ (I-AS_k)^{-1}b\; ,
\end{align}

\vspace{-.3cm}

\noindent where $\Si_k\equiv \Si_{z^k}$. The iteration has the terminating criterion

\vspace{-.5cm}

\begin{align*}
z^k=z^{k+1}\; .
\end{align*}

\vspace{-.3cm}

\noindent It was developed in \cite{griewank2014abs} and is equivalent to the semi-iterative solver for the equilibrium problem \eqref{brugEq} developed in \cite{brugnano2008iterative}. 
A first, albeit rather restrictive, convergence result is \cite[Prop. 7.2]{griewank2014abs}:

\begin{proposition}
	If $\|A\|_p < 1/3$ for any $p$-norm, then the iteration \eqref{newtonz} converges for all $b$ 
	in finitely many iterations from any $z_0$ to the unique solution of \eqref{AVE}. Moreover,  the p-norms of both $z_i-z$ as well as $(I-AS_i)z_{i+1}-b$ are monotonically reduced.
\end{proposition}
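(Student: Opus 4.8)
The plan is to exhibit a contraction: with $z$ the unique solution of \eqref{AVE} (which exists by Remark~\ref{bounded}, since $\rho^{\R}(A)\le\|A\|_p<\tfrac13<1$), I will bound $\|z_{k+1}-z\|_p$ by a fixed multiple $q<1$ of $\|z_k-z\|_p$. The two facts I would use repeatedly are that $\|AS\|_p=\|A\|_p$ for every signature $S$ (multiplication by $S$ only permutes signs of the argument), so each $I-AS_k$ is invertible with $\|(I-AS_k)^{-1}\|_p\le(1-\|A\|_p)^{-1}$ via the Neumann series. Writing $b=(I-AS_z)z$ and subtracting it from $(I-AS_k)z_{k+1}=b$, the error satisfies the recursion
$$
z_{k+1}-z=(I-AS_k)^{-1}\bigl(b-(I-AS_k)z\bigr)=(I-AS_k)^{-1}A(S_k-S_z)z,
$$
because $b-z+AS_kz=-AS_zz+AS_kz=A(S_k-S_z)z$.

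The heart of the argument is a sign inequality. If $\sign(z_{k,i})\neq\sign(z_i)$, then $z_{k,i}$ and $z_i$ lie on opposite sides of the origin, so $|z_i|\le|z_{k,i}-z_i|$. The $i$-th entry of $(S_k-S_z)z$ is $0$ when the two signs agree and equals $\pm 2z_i$ when they differ; combined with the previous inequality this gives the entrywise estimate $|(S_k-S_z)z|\le 2\,|z_k-z|$, hence $\|(S_k-S_z)z\|_p\le 2\,\|z_k-z\|_p$ by monotonicity of every $p$-norm. Feeding this and the operator bounds into the recursion yields
$$
\|z_{k+1}-z\|_p\ \le\ \frac{2\|A\|_p}{1-\|A\|_p}\,\|z_k-z\|_p\ =:\ q\,\|z_k-z\|_p,
$$
and the elementary equivalence $q<1\iff 3\|A\|_p<1$ is exactly the hypothesis. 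This establishes the monotone reduction of $\|z_k-z\|_p$.

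Finite termination then follows from the geometry. The contraction forces $z_k\to z$; on every index with $z_i\neq 0$ the sign $\sign(z_{k,i})$ must eventually coincide with $\sign(z_i)$, while on indices with $z_i=0$ the sign is immaterial since $(S_k-S_z)z$ annihilates those components. Hence $(S_k-S_z)z=\mathbf 0$ for all large $k$, and the recursion gives $z_{k+1}=z$ \emph{exactly}; thereafter $S_{k+1}=S_z$ and the stopping test $z_{k+1}=z_{k+2}$ triggers. For the residual I would use the companion identity $(I-AS_{k+1})z_{k+1}-b=A(S_k-S_{k+1})z_{k+1}$, obtained by subtracting $b=(I-AS_k)z_{k+1}$, and apply the same sign inequality to the pair $(z_k,z_{k+1})$ to control its $p$-norm.

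The main obstacle is extracting the factor $2$ in the sign inequality sharply: it is precisely this factor that pins the convergence threshold at $\tfrac13$, so any looser bound would weaken the result. A secondary delicate point is the bookkeeping in the finite-termination step, where one must separate genuine sign stabilization on the support of $z$ from harmless sign flips on the components where $z_i=0$, which lie in the kernel of $\diag(z)$ and therefore do not affect $(S_k-S_z)z$.
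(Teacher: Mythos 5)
Your proof is correct, but there is nothing in the paper to compare it against: the paper states this proposition purely as a citation of \cite[Prop.~7.2]{griewank2014abs} and gives no proof of its own, using it only as background for the extension in Theorem~\ref{snConv}. Your argument is a clean, self-contained contraction proof: the error identity $z_{k+1}-z=(I-AS_k)^{-1}A(S_k-S_z)z$, the norm invariance $\|AS\|_p=\|A\|_p$ together with the Neumann bound $\|(I-AS_k)^{-1}\|_p\le(1-\|A\|_p)^{-1}$, and the factor-$2$ sign inequality $|(S_k-S_z)z|\le 2|z_k-z|$ combine to the contraction factor $q=2\|A\|_p/(1-\|A\|_p)$, and $q<1$ is precisely $\|A\|_p<1/3$. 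The finite-termination argument is also sound: once $z_k$ is close enough to $z$, the signature agrees with $S_z$ on the support of $z$, the indices with $z_i=0$ are harmless because they are annihilated in $(S_k-S_z)z$, so $(S_k-S_z)z=\mathbf 0$ forces $z_{k+1}=z$ exactly and the stopping test fires one step later.

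Two loose ends, both minor. First, the quantity $(I-AS_i)z_{i+1}-b$ in the statement is literally zero by the definition of the iteration \eqref{newtonz}; your reading of it as the AVE residual $r_{k+1}\equiv(I-AS_{k+1})z_{k+1}-b=z_{k+1}-A\vert z_{k+1}\vert-b$ is the only sensible one, and this should be said explicitly. Second, for that residual your bound $\|r_{k+1}\|_p\le 2\|A\|_p\,\|z_{k+1}-z_k\|_p$ does not by itself give monotone reduction of the residual sequence; you need one more link, namely $(I-AS_k)(z_k-z_{k+1})=(I-AS_k)z_k-b=r_k$, hence $\|z_k-z_{k+1}\|_p\le(1-\|A\|_p)^{-1}\|r_k\|_p$, which closes the chain to $\|r_{k+1}\|_p\le q\,\|r_k\|_p$ with the same $q<1$. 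With that one line added, both monotonicity claims are fully established.
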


Moreover, in \cite[Prop. 7]{griewank2014abs} convergence was proved for the first two restrictions on $A$ in Theorem \ref{citeSGE}. The following extends this result to the criteria in Theorem \ref{citeSGE}.3-4. 

\begin{theorem}\label{snConv}
	Let $A\in\operatorname M_n(\R)$ and $z,b\in \R^n$ such that \eqref{AVE_std} is satisfied. If $A$ conforms to any of the conditions listed in Theorem \ref{citeSGE}, then for any initial vector $z^0\in\R^n$ the full step Newton method \eqref{newtonz} computes the unique solution of the AVE \eqref{AVE_std} correctly in at most $n+1$ iterations.
\end{theorem}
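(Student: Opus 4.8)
The plan is to show that each Newton step freezes one more ``layer'' of signs correctly, in exactly the order in which the signed Gaussian elimination peels off the maximal entries of the right hand side, and to package this as an induction on the dimension $n$. Two preliminaries come first. Under any of the four conditions we have $\|A\|_\infty<1$, so $\|AS_k\|_\infty=\|A\|_\infty<1$ for every signature $S_k$; hence each $I-AS_k$ is invertible (Neumann series) and the iteration \eqref{newtonz} is well defined, while $\rho^{\R}(A)\le\|A\|_\infty<1$ gives, via Remark \ref{bounded} and Theorem \ref{unique}, the unique solvability of the AVE. The degenerate case $b=\mathbf 0$ forces $z=\mathbf 0$ and is cleared in one step, so I assume $b\ne\mathbf 0$.

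The first real step is a one-step sign lemma: for an arbitrary signature $S'$ the vector $w=(I-AS')^{-1}b$ satisfies $\operatorname{sign}(w_i)=\operatorname{sign}(b_i)$ for every $i\in\mathcal I^b_{\max}$. To see this I rewrite the defining identity $w-AS'w=b$ as an AVE: using $w=S_ww$'s companion relation $w=S_w|w|$ I get $S'w=(S'S_w)|w|$, so with the signature $S''=S'S_w$ and $\tilde A=AS''$ the vector $w$ solves $w-\tilde A|w|=b$. Since right multiplication by a signature leaves the $\infty$-norm, irreducibility, strict diagonal dominance, and the tridiagonal/symmetric pattern of $|A|$ all unchanged, $\tilde A$ still meets whichever of the conditions $A$ does; Theorem \ref{citeSGE} applied to $\tilde A$ then yields $\operatorname{Neq}(\tilde A,b,w)=\emptyset$, which is exactly the asserted coincidence. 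In particular every iterate $z^k$ with $k\ge1$ agrees in sign with $b$, hence with the true solution $z$ (to which Theorem \ref{citeSGE} applies as well), on the set $T:=\mathcal I^b_{\max}$.

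The second step is the reduction. Fix $k\ge1$, so that $S_k$ carries the correct signs on $T$. Eliminating the variables indexed by $T$ from the linear system $(I-AS_k)z^{k+1}=b$ produces a reduced system $(I-\bar A\,S_{z^k_{T^c}})\,z^{k+1}_{T^c}=\bar b$; crucially, the elimination formula uses only the $T$-signs, which are correct, so $\bar A$ and $\bar b$ coincide with the reduction \eqref{reduction1} of the AVE itself, and $\bar A$ again satisfies one of the conditions (the tridiagonal case survives because deleting nodes from a path graph leaves a path, and the $|\cdot|$-symmetry of the fill-in is immediate from $|A|$ being symmetric). Consequently the sequence $\bar z^{\,j}:=z^{\,j+1}_{T^c}$ ($j\ge0$) is precisely the full step Newton iteration \eqref{newtonz} for the reduced AVE $\bar z=\bar b+\bar A|\bar z|$, whose unique solution is $z_{T^c}$, started from $\bar z^{\,0}=z^1_{T^c}$.

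Finally I close the induction on $n$. The base case $n=1$ (a scalar $|a|<1$) is immediate: $z^1=b/(1-as_0)$ has the sign of $b$ because $1-as_0>0$, so $S_1=S$ and $z^2=z$, i.e. convergence in $\le 2$ iterations. For the step, write $t=|T|\ge1$; the reduced AVE has dimension $n-t<n$, so by the induction hypothesis its Newton iteration reaches $z_{T^c}$ within $(n-t)+1$ iterations, whence $\bar z^{\,j}$ has the correct signs for all $j\ge n-t$. Combining this with the always-correct signs on $T$ (for iterates of index $\ge1$), the iterate $z^{k-1}$ carries the full correct signature $S$ as soon as $k-2\ge n-t$ and $k-1\ge1$, i.e. for $k=n-t+2$; then $S_{k-1}=S$ forces $z^{k}=(I-AS)^{-1}b=z$. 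Since $t\ge1$ this occurs at iteration $n-t+2\le n+1$, the claimed bound. I expect the main obstacle to be this reduction step: one must verify carefully that eliminating the $T$-block from the linearized system is independent of the (possibly wrong) signs on $T^c$ and reproduces exactly the reduced AVE of the signed Gaussian elimination, and that each of the four structural conditions---the tridiagonal/symmetric one in particular---is preserved under that elimination.
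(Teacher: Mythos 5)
Your strategy is the same as the paper's: a one-step sign lemma (every iterate $z^k$, $k\ge 1$, solves an AVE with matrix $AS_{k-1}S_{z^k}$, which still satisfies the hypotheses, so Theorem \ref{citeSGE} forces correct signs on $\mathcal I^b_{\max}$), followed by elimination of those variables as in \eqref{reduction1} and recursion on the reduced AVE; the sign lemma and the reduction step are both correct. The gap is in your final counting. From the induction hypothesis as you state it --- the reduced iteration ``reaches $z_{T^c}$ within $(n-t)+1$ iterations'' --- you conclude that $\bar z^{\,j}$ has the correct signs for all $j\ge n-t$. That is a non sequitur: the hypothesis only guarantees correct signs for $j\ge (n-t)+1$. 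An iterate can coincide with the solution even though the signature that produced it was wrong, since $(I-\bar A S')^{-1}\bar b=z_{T^c}$ only yields $\bar A\,\bigl(S'z_{T^c}-\lvert z_{T^c}\rvert\bigr)=\mathbf 0$, which does not force $S'z_{T^c}=\lvert z_{T^c}\rvert$ (take $\bar A$ singular, e.g. $\bar A=\mathbf 0$). So convergence of the reduced iteration at step $(n-t)+1$ tells you nothing about the signature at step $n-t$. With the valid implication, your argument shows $S_{k-1}=S_z$ only once $k-2\ge (n-t)+1$, i.e. $z^k=z$ for $k=n-t+3$, which in the worst case $t=1$ is the bound $n+2$ --- one more iteration than the theorem claims.

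The repair is to strengthen the induction hypothesis so that it tracks signatures rather than the event of hitting the solution: in dimension $n$, prove that $S_{z^k}=S_z$ for all $k\ge n$ (whence $z^k=z$ for all $k\ge n+1$). Your base case $n=1$ already does this: $z^1$ has the sign of $b$, so the signature is correct from iteration $1=n$ on. In the inductive step, the strengthened hypothesis applied to the reduced iteration $\bar z^{\,j}=z^{\,j+1}_{T^c}$ of dimension $n-t$ gives a correct signature for all $j\ge n-t$; hence $S_{z^k}$ is correct on $T^c$ for $k\ge n-t+1$ and on $T$ for $k\ge 1$, so $S_{z^k}=S_z$ for $k\ge n-t+1\le n$ (here $t\ge 1$ is used), and one further iteration returns $z$, for $k\ge n-t+2\le n+1$. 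This sign-based bookkeeping is exactly what the paper's terse phrase ``the argument applies recursively'' amounts to: each iteration fixes one further layer of signs, there are at most $n$ layers, and a final iteration with the fully correct signature delivers the solution.
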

\begin{proof} Note that all conditions listed in Theorem \ref{citeSGE} are invariant under scalings of $A$ by a signature matrix.
	Now assume that $z$ satisfies the equality 
	
	\vspace{-.3cm}
	
	$$z-Az\ =\ b$$

	\vspace{-.2cm}
	
	\noindent and set $S\equiv \Si_z$. Then, since $SS=I$, we have 
	
	\vspace{-.3cm}
	
	$$b\ =\ z-ASS z\ \equiv z-A'\vert z\vert\; ,$$ 
	
	\vspace{-.1cm}
	
	\noindent and $A'$ is still strictly diagonally dominant with $\Vert A\Vert_\infty<1$. This implies $\operatorname{Neq}(A',b,z)$ is empty, by Theorem \ref{citeSGE}. Hence, we have 
	
	\vspace{-.6cm}
	
	$$s'_{i}\ =\ 1\qquad \text{if}\quad b_i\ge 0\qquad \text{and}\qquad s'_{i}\ =\ -1\qquad \text{else, for all}\ i\in b_{max}\; .$$
	
	\vspace{-.2cm}
	
	\noindent That is, the signs with index in $\mathcal I^b_{\max}$ are fixed throughout all iterations. Now assume $i\in\mathcal I^b_{\max}$. Then for all $k\ge 1$ we will have $\operatorname{sign}(z_i^k)=\operatorname{sign}(b_i)$. This allows us to rewrite the $i$-th equation in \eqref{AVE_std} and express the $z^k_i$ as a linear combination of the other $z^k_j$ by the transformations of $A$ and $b$ to $\bar A$ and $\bar c$, respectively, as they were defined in \eqref{reduction1}. This corresponds to one step of Gaussian elimination. As mentioned in the proof of Theorem \ref{sgeCorr}, all restrictions listed in Theorem \ref{citeSGE}.1-4 are invariant under the latter operation, which implies that the argument applies recursively and all signs of $z$ are fixed correctly in at most $n+1$ iterations.  Again, we remark that the conditions in Theorem \ref{citeSGE}.1-4 imply the uniqueness of the solution at which we arrive via the afore described procedure. 
\end{proof}

\section{Comparison of both solvers}\label{sec:compare}
Despite their proved range of correctness, resp. convergence being the same, both solvers are not equivalent and neither solver has a stricly larger range than the other:
Let
	$$A\equiv \begin{bmatrix} \frac{\varepsilon}{2}& \frac{1+\varepsilon}{2}  \\ 0 & \frac 12 \end{bmatrix} \qquad \text{and} \qquad    z\equiv\begin{bmatrix} \frac{\varepsilon}{2}  \\ 1 \end{bmatrix}\;, $$
	where $\varepsilon>0$ is arbitrarily small.
	Then, for $b\equiv z-A\vert z\vert$ we have $b=(-\frac{2+\varepsilon^2}{4}, \frac 12)^\intercal$. And clearly $\vert b_1\vert >\vert b_2\vert$, but  $\operatorname{sign}(b_1)\ne \operatorname{sign}(z_1)$. It was shown in \cite[Prop.5.2]{radons2016sge} that for $A$ and $b$ as described, the SGE is lead astray -- but we have $\lVert A\rVert_\infty = \frac 12 + \varepsilon$. An elementary calculation shows that for $n\le 2$ we have convergence of the FN method if $\Vert A\Vert_\infty <1$ \cite{radons2016master}. 	
	In \cite[Sec. 7]{griewank2014abs} it was shown that for a system with 
	$$A\equiv \begin{bmatrix} 0 & 0 & a \\ a & 0 & 0 \\ 0 & a & 0 \end{bmatrix} \qquad \text{and} \qquad    b\equiv\begin{bmatrix} 1  \\ 1 \\ 1 \end{bmatrix}\;, $$
	where $a=5/8$, the FN method cycles for all starting signatures that contain both positive and negative signs.
	It is sraightforward to show that the SGE solves the corresponding AVE, but we refrain from performing this exercise.
	
	\vspace{-.5cm}
	
\bibliographystyle{alpha}
\bibliography{references}

\newcommand{\etalchar}[1]{$^{#1}$}
\begin{thebibliography}{GHR{\etalchar{+}}18}

\bibitem[Bar51]{bartlett1951smw}
M.S. Bartlett.
\newblock {An inverse matrix adjustment arising in discriminant analysis}.
\newblock {\em Annals of Mathematical Statistics}, 22(1):107--111, 1951.

\bibitem[BC08]{brugnano2008iterative}
L.~Brugnano and V.~Casulli.
\newblock Iterative solution of piecewise linear systems.
\newblock {\em SIAM Journal on Scientific Computing}, 30(1):463--472, 2008.

\bibitem[GBRS15]{griewank2014abs}
A.~Griewank, J.U. Bernt, M.~Radons, and T.~Streubel.
\newblock Solving piecewise linear equations in abs-normal form.
\newblock {\em Linear Algebra and Its Applications}, 471:500--530, 2015.

\bibitem[GHR{\etalchar{+}}18]{NewtonPL}
A.~Griewank, R.~Hasenfelder, M.~Radons, L.~Lehmann, and T.~Streubel.
\newblock Piecewise linear secant approximation via algorithmic piecewise
  differentiation.
\newblock {\em Optimization Methods and Software}, (33), 2018.

\bibitem[GSL{\etalchar{+}}18]{ODE}
A.~Griewank, T.~Streubel, L.~Lehmann, M.~Radons, and R.~Hasenfelder.
\newblock Piecewise linear secant approximation via algorithmic piecewise
  differentiation.
\newblock {\em Optimization Methods and Software}, (33), 2018.

\bibitem[HHZ11]{hu2011absval}
S.-L. Hu, Z.-H. Huang, and Q.~Zhang.
\newblock {A generalized Newton method for absolute value equations associated
  with second order cones}.
\newblock {\em Journal of Computational and Applied Mathematics},
  235(5):1490--1501, 2011.

\bibitem[Man07a]{mangasarian2007concave}
O.L. Mangasarian.
\newblock Absolute value equation solution via concave minimization.
\newblock {\em Optimization Letters}, 1(1):3--8, 2007.

\bibitem[Man07b]{mangasarian2007absval}
O.L. Mangasarian.
\newblock Absolute value programming.
\newblock {\em Computational Optimization and Applications}, 36(1):43--53,
  2007.

\bibitem[Man14]{mangasarian2014abs}
O.L. Mangasarian.
\newblock {Absolute value equation solution via linear programming}.
\newblock {\em Journal of Optimization Theory and Applications},
  161(3):870--876, 2014.

\bibitem[Mey00]{meyer2000matrix}
C.~Meyer.
\newblock {\em Matrix analysis and applied linear algebra}.
\newblock SIAM, 2000.

\bibitem[MM06]{mangasarian2006absval}
O.L. Mangasarian and R.R. Meyer.
\newblock Absolute value equations.
\newblock {\em Linear Algebra and Its Applications}, 419:359--367, 2006.

\bibitem[Neu90]{neumaier1990interval}
A.~Neumaier.
\newblock {\em {Interval methods for systems of equations.}}
\newblock Cambridge University Press, 1990.

\bibitem[Rad16a]{radons2016sge}
M.~Radons.
\newblock Direct solution of piecewise linear systems.
\newblock {\em Theoretical Computer Science}, 626:97--109, 2016.

\bibitem[Rad16b]{radons2016master}
M.~Radons.
\newblock {\em Efficient solution of piecewise linear systems}.
\newblock Master-Thesis, 2016.

\bibitem[Roh89]{rohn1989interval}
J.~Rohn.
\newblock Systems of linear interval equations.
\newblock {\em Linear Algebra and Its Applications}, 126:39--78, 1989.

\bibitem[Rum97]{rump1997theorems}
S.M. Rump.
\newblock {Theorems of Perron-Frobenius type for matrices without sign
  restrictions}.
\newblock {\em Linear Algebra and Its Applications}, 266:1--42, 1997.

\bibitem[Rum02]{rump2002theorems}
S.M. Rump.
\newblock {Perron-Frobenius theory for complex matrices}.
\newblock {\em Linear Algebra and Its Applications}, 363:251--273, 2002.

\bibitem[SGRB14]{streubel2014abs}
T.~Streubel, A.~Griewank, M.~Radons, and J.U. Bernt.
\newblock {Representation and analysis of piecewise linear functions in
  abs-normal form}.
\newblock {\em Proc. of the IFIP TC 7}, pages 323--332, 2014.

\bibitem[Wie50]{wielandt1950matrix}
H.~Wielandt.
\newblock {Unzerlegbare, nicht negative Matrizen}.
\newblock {\em Mathematische Zeitschrift}, 52(1):642--648, 1950.

\bibitem[YY12]{yuan2012iterative}
X.-T. Yuan and S.~Yan.
\newblock {Nondegenerate piecewise linear systems: A finite newton algorithm
  and applications in machine learning}.
\newblock {\em Neural Computation}, 24(4):1047--1084, 2012.

\end{thebibliography}

\end{document}